\newtheorem{theorem}{Theorem}[section]
\newtheorem{conj}[theorem]{Conjecture}
\newtheorem{lem}[theorem]{Lemma}
\theoremstyle{definition}
\def\Sym{\hbox{\rm Sym}}
\long\def\delete#1{}
\title{The last patch for classifying shuffle groups}
\author{Junyang Zhang\footnote{Supported by the Natural Science Foundation of Chongqing (CSTB2022NSCQ-MSX1054). E-mail: jyzhang@cqnu.edu.cn (Junyang Zhang)}}
\affil{{\small School of Mathematical Sciences, Chongqing Normal University,\\ Chongqing 401331, P. R. China}}
\date{}
\begin{document}

\openup 0.5\jot
\maketitle

\vspace{-10mm}
\begin{abstract}

Divide a deck of $kn$ cards into $k$ equal piles and place them from left to right. The standard shuffle $\sigma$ is performed by picking up the top cards one by one from left to right and repeating until all cards have been picked up. For every permutation $\tau$ of the $k$ piles, use $\rho_{\tau}$ to denote the induced permutation on the $kn$ cards. The shuffle group $G_{k,kn}$ is generated by $\sigma$ and the $k!$ permutations $\rho_{\tau}$.
It was conjectured by Cohen et al in 2005 that the shuffle group $G_{k,kn}$ contains $A_{kn}$ if $k\geq3$, $(k,n)\ne\{4,2^f\}$ for any positive integer $f$ and $n$ is not a power of $k$. Very recently, Xia, Zhang and Zhu reduced the proof of the conjecture to that of the $2$-transitivity of the shuffle group and then proved the conjecture under the condition that $k\ge4$ or $k\nmid n$. In this paper, we proved that the group $G_{3,3n}$ is $2$-transitive for any positive integer $n$ which is a multiple of $3$ but not a power of $3$. This result leads to the complete classification of the shuffle groups $G_{k,kn}$ for all $k\ge2$ and $n\ge1$.

\medskip
{\em Keywords:} shuffle group, $2$-transitivity

\medskip
{\em AMS subject classifications (2010):} 20B35
\end{abstract}

\section{Introduction}
Usually, to perfectly shuffle a deck of $2n$ cards, the cards is divided exactly in half and the two halves are shuffled so that they are perfectly interlaced.
There are two kinds of such shuffle which are referred to as the out-shuffle and in-shuffle. The out-shuffle leaves the original top card of the deck on top of the shuffled deck, while the in-shuffle moves the original top card to the second from the top of the shuffled deck. The permutation group generated by these two shuffles was completely determined for all $n$ by Diaconis, Graham, and Kantor \cite{DGK1983} in 1983. At the end of \cite{DGK1983}, it was suggested to study the so-called ``many handed shuffler": Divide a deck of $kn$ cards into $k$ equal piles and place them from left to right. Then there are $k!$ perfect shuffles as there are $k!$ possible orders of picking up the piles
to perfectly interleave. The standard shuffle $\sigma$ is performed by picking up the top cards one by one from left to right and repeating until all cards have been picked up. For a permutation $\tau$ of the $k$ piles, we have an induced permutation $\rho_{\tau}$ on the $kn$ cards which keeps the order of the cards within each pile. Note that for every $\tau$, the
composition $\rho_{\tau}\sigma$ (performed from left to right) is a perfect shuffle. In this way, we obtain all the $k!$ perfect shuffles.

\smallskip
Let $[m]:=\{0,1,\ldots,m-1\}$ for every positive integer $m$. For a deck of $kn$ cards being evenly divided into $k$ piles, label the piles from left to right as $0,1,\ldots,k-1$. Denote the set of the positions of the $kn$ cards by $[kn]$ such that for a card in position $i+jn$, we mean it is the $i$-th card from top to bottom in the pile $j$ with the top one in pile $j$ being the $(0+jn)$-th card. Then every perfect shuffle happens to be a permutation on $[kn]$. In particular, the standard shuffle $\sigma$ is defined by
\begin{equation*}
  (i+jn)^{\sigma}=ki+j~\mbox{for all}~i\in[n]~\mbox{and}~j\in[k].
\end{equation*}
For a permutation $\tau$ on $[k]$, the induced permutation $\rho_{\tau}$ on $[kn]$ is defined by
\begin{equation*}
  (i+jn)^{\rho_{\tau}}=i+j^{\tau}n~\mbox{for all}~i\in[n]~\mbox{and}~j\in[k].
\end{equation*}
Then $\{\rho_{\tau}\sigma\mid\tau\in \Sym([k])\}$ is the set all perfect shuffles. We call the subgroup $G_{k,kn}$ of $\Sym([kn])$ generated by $\{\rho_{\tau}\sigma\mid\tau\in \Sym([k])\}$ the shuffle group on $kn$ cards. It is obvious that $G_{k,kn}=\langle\sigma,\rho_{\tau}\mid\tau\in \Sym([k])\rangle$.

\smallskip
Recall that Diaconis, Graham and Kantor \cite{DGK1983} determined $G_{2,2n}$ for every positive integer $n$ and suggested to investigate $G_{k,kn}$ for general $k$. In 1987, Medvedoff and Morrison \cite{MM1987} proved that for every positive integer $e$ the group $G_{k,k^{e+1}}$ is the primitive wreath product
of $\Sym([k])$ by the cyclic group of order $e+1$. It is also conjectured in \cite{MM1987} that the group $G_{4,2^{e}}$ with $e$ being odd is isomorphic to the group of affine transformations of an $e$-dimensional vector space over the field of order $2$. This conjecture was proved in 2005 by Cohen, Harmse, Morrison and Wright \cite{CHMW}. In order to give a direction for a complete classification of the shuffle groups $G_{k,kn}$ for all $k\ge2$, the following  conjecture was posed in \cite{CHMW}.
\begin{conj}
\label{shuffle}
The shuffle group $G_{k,kn}$ contains $A_{kn}$ if $k\geq3$, $(k,n)\ne\{4,2^f\}$ for any positive integer $f$ and $n$ is not a power of $k$.
\end{conj}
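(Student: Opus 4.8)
The plan is to establish $2$-transitivity of $G:=G_{3,3n}$ by showing that a point stabiliser is transitive on the remaining points. Take $\infty:=3n-1$, the bottom card of pile $2$. Since $(i+jn)^\sigma=3i+j$ equals $3n-1$ only when $i=n-1$ and $j=2$, the shuffle $\sigma$ fixes $\infty$, and $\rho_{(01)}$ fixes $\infty$ because $(01)$ fixes the pile $2$. Identifying $[3n]\setminus\{\infty\}=\{0,1,\dots,3n-2\}$ with $\Z/N$ for $N:=3n-1$, the identity $3(i+jn)=3i+j+jN$ shows that $\sigma$ acts on $\Z/N$ as multiplication by $3$, while $\rho_{(01)}$ acts as the involution $\prod_{x\in[n]}(x,\ x+n)$, interchanging the first two thirds of $\Z/N$ and fixing the last third. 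Two arithmetic facts will be used throughout: $3n\equiv1\pmod N$ (so $n\equiv3^{-1}\pmod N$) and $\gcd(n,N)=\gcd(n,3n-1)=1$. It is therefore enough to prove that the subgroup $H:=\langle\sigma,\rho_{(01)}\rangle\le G_\infty$ is transitive on $\Z/N$: transitivity of $G$ on $[3n]$ then follows since $\rho_{(12)}$ maps $\infty$ to $2n-1\ne\infty$, and the same element shows $G\ne H$, whence $G$ is $2$-transitive. Together with the Xia--Zhang--Zhu reduction and the previously known cases ($k\ge4$ or $k\nmid n$; the wreath products $G_{k,k^{e+1}}$; the affine groups $G_{4,2^{f}}$; and Diaconis--Graham--Kantor for $k=2$), this completes the classification of all $G_{k,kn}$.

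For the transitivity of $H$ I would analyse the orbits of $\langle\sigma\rangle$ on $\Z/N$, which are precisely the $3$-cyclotomic cosets modulo $N$: the fixed point $0$; the cosets of the cyclic group $\langle3\rangle\le(\Z/N)^{\times}$ on the units; and, for each divisor $d\mid N$ with $1<d<N$, the sets $d\cdot C$ as $C$ runs over the cosets of the image of $\langle3\rangle$ in $(\Z/(N/d))^{\times}$. The element $\rho_{(01)}$ together with its $\langle\sigma\rangle$-conjugates, which satisfy $\sigma^{t}\rho_{(01)}\sigma^{-t}=\prod_{x\in[n]}(3^{t}x,\ 3^{t}x+3^{t-1})$ (using $3^{t}n\equiv3^{t-1}\pmod N$), act as ``bridges'' between these cosets, and all of them lie in $H$. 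Following the $H$-orbit $O$ of $0$: applying $\rho_{(01)}$ gives $n\in O$, hence $1=n^{\sigma}\in O$, hence the whole coset $\langle3\rangle\subseteq O$ (here $n\ge2$ is used); one must then show that iterating the bridges forces $O$ to meet every $3$-cyclotomic coset, so that $O=\Z/N$. Writing $n=3^{a}b$ with $a\ge1$ and $3\nmid b$, the hypothesis ``$n$ not a power of $3$'' says exactly $b\ge2$, and this is the decisive input: when $b=1$ one has $N=3^{a+1}-1$ with $\langle3\rangle$ of order merely $a+1$, the bridges fail to connect all cosets, and $H$ --- indeed all of $G_\infty$ --- is intransitive, consistent with $G_{3,3^{a+1}}=\Sym([3])\wr C_{a+1}$ being imprimitive.

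The connectivity of this bridge graph on the cyclotomic cosets is the crux, and no soft argument can suffice, since the statement is genuinely false at the powers of $3$; the work must be sensitive to the arithmetic of $N=3n-1$. I would approach it in two stages. First, a reduction in the $3$-adic valuation of $n$: writing $n=3m$ one checks that $\sigma^{2}$ equals the standard $9$-pile shuffle on the deck of $9m$ cards regarded as nine piles of $m$ (so that $\rho_{(01)}$ and $\rho_{(012)}$ become block permutations of the nine piles), which gives a handle for relating the problem for $n$ to a smaller one and reducing to the base case $a=1$, i.e.\ $n=3b$ with $\gcd(b,3)=1$ and $b\ge2$. Second, for the base case I would make the bridges explicit: starting from $\langle3\rangle\subseteq O$ and repeatedly translating by $\pm n$ (via $\rho_{(01)}$) and multiplying by $3$ (via $\sigma$), one shows --- using $\gcd(n,N)=1$ and elementary identities such as $\gcd(n\pm1,N)\mid4$ --- that $O$ eventually captures a unit in every coset of $\langle3\rangle$ and a representative of every ``divisor layer'' $d\cdot(\Z/(N/d))^{\times}$ with $d\mid N$, whence $O=\Z/N$. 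I expect the base-case analysis to split into a modest number of cases according to congruences of $b$ modulo small numbers, but --- in contrast with the exceptional family $(k,n)=(4,2^{f})$ --- with no surviving exception once $n$ is a multiple of $3$ that is not a power of $3$.
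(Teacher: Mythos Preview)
Your setup coincides with the paper's: both take $H=\langle\sigma,\rho_{(01)}\rangle$ inside the stabiliser of $3n-1$ and aim to show that $H$ is transitive on $[3n-1]$. Your reformulation of $\sigma$ as multiplication by $3$ on $\Z/(3n-1)$ is correct and natural (the paper does not phrase things this way), and the reduction of Conjecture~\ref{shuffle} to this transitivity statement via Xia--Zhang--Zhu is exactly what the paper invokes.

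However, your proposal stops precisely where the real work begins. You yourself flag the crux as the connectivity of the ``bridge graph'' on $3$-cyclotomic cosets, and then offer only a two-stage plan: a reduction in the $3$-adic valuation of $n$ via the observation that $\sigma^{2}$ is the $9$-pile standard shuffle, and a base-case analysis for $n=3b$ with $\gcd(b,3)=1$. Neither stage is carried out. The first is not even formulated precisely: the identity $\sigma^{2}=\sigma_{9}$ is correct, but it does not by itself relate $H$ for $n$ to any transitivity problem on a smaller deck, since $\rho_{(01)}$ in the $3$-pile setup becomes the $9$-pile block permutation $\rho_{(03)(14)(25)}$, and you give no inductive hypothesis about $\langle\sigma_{9},\rho_{(03)(14)(25)}\rangle$ nor any mechanism by which such a hypothesis would descend. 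For the base case you promise ``a modest number of cases according to congruences of $b$'' but provide no argument that the bridges $x\mapsto x\pm n$ together with multiplication by $3$ reach every cyclotomic coset; the divisibility facts $\gcd(n\pm1,N)\mid4$ are true but do not by themselves force this.

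The paper bypasses the cyclotomic-coset viewpoint entirely. It writes each $x\in[3n]$ in the mixed-radix form $x=(x_s,\dots,x_0;X)$ with $X\in[t]$ and $x_i\in[3]$ (here $n=3^{s}t$, $3\nmid t$), introduces explicit elements $\alpha_i=\sigma^{i}\rho_{(01)}\sigma^{-i}$ (which toggles the digit $x_{s-i}$ between $0$ and $1$) and $\beta=\sigma^{-1}\rho_{(01)}\sigma$ (which shifts $x$ by $\pm1$ depending on a residue condition), and runs an induction on the monovariant $T(x)=|\{i:x_i=2\}|$. First $[t]\subseteq 0^{H}$ is shown directly; then any $x$ with $T(x)=0$ is sent into $[t]$ by the $\alpha_i$; finally, for $T(x)\ge1$ a preparatory lemma arranges $x_0=0$, $x_1=2$, after which an explicit word in $\sigma^{\pm1},\alpha_s,\beta$ --- split according to $t\bmod 3$ and a further residue condition --- produces $b\in x^{H}$ with $T(b)<T(x)$. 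This concrete monovariant that the generators can strictly decrease is the missing idea in your proposal.
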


In 2021, among other results, Amarra,  Morgan and  Praeger confirmed this conjecture for three  infinite families of pairs of integers: all $(k, n)$
with $k > n$; all $(k, n)\in\{(\ell^e,\ell^f)\mid \ell\ge2, \ell^e>4, e\nmid f\}$; and all $(k, n)$ with $k=2^e\ge4$ and $n$ not a power of $2$. Very recently, Xia, Zhang and Zhu \cite{XZZ} proved that Conjecture \ref{shuffle} is true if the shuffle group $G_{k,kn}$ is $2$-transitive and confirmed the $2$-transitivity of $G_{k,kn}$ under the condition that $k\ge4$ or $k\nmid n$. They also conjectured that $G_{3,3n}$ is $2$-transitive for any positive integer $n$ which is a multiple of $3$ but not a power of $3$. By analyzing the parity of generators of $G_{k,kn}$, it is easy to check that  $G_{k,kn}$ is contained in $A_{kn}$ if and only if either $n\equiv2\pmod4$ and $k\equiv0~\mbox{or}~1\pmod4$, or $n\equiv0 \pmod4$ (see, for example, \cite[Theorem 1]{MM1987}). Therefore, the last patch for classifying all shuffle groups is to prove the $2$-transitivity of $G_{3,3n}$ where $n$ is a multiple of $3$ but not a power of $3$. It is done in this paper. Our main result is the following theorem.
\begin{theorem}
\label{3n}
The shuffle group $G_{3,3n}$ is $2$-transitive if $3\mid n$ and $n$ is not a power of $3$.
\end{theorem}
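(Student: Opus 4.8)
The plan is to reduce Theorem~\ref{3n} to a transitivity statement about a point stabilizer and then verify that statement by exhibiting enough explicit elements. Write $G=G_{3,3n}$. Since $G$ is transitive on $[3n]$ (well known), $G$ is $2$-transitive if and only if the stabilizer $G_0$ of the point $0$ is transitive on $[3n]\setminus\{0\}=\{1,2,\dots,3n-1\}$; the point $0$ is convenient because $\sigma$ fixes it. From $(i+jn)^{\sigma}\equiv 3(i+jn)\pmod{3n-1}$ one sees that $\sigma$ fixes $0$ and $3n-1$ and permutes $\{1,\dots,3n-2\}$ by multiplication by $3$ modulo $3n-1$. Besides $\sigma$, the stabilizer $G_0$ visibly contains $\rho_{\tau}$ for the transposition $\tau$ interchanging piles $1$ and $2$: this $\rho_\tau$ fixes pile $0$ pointwise and, as a permutation of $[3n]$, equals $\prod_{i=0}^{n-1}(i+n\ \ i+2n)$. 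Hence $G_0$ also contains every conjugate $\sigma^{-j}\rho_\tau\sigma^{j}$ and, more generally, the subgroup $\langle\sigma,\rho_\tau\rangle$ and all conjugates of $\rho_\tau$ by its elements; further elements of $G_0$ arise from those words in the remaining generators $\rho_\pi$ that happen to fix $0$.

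I would then analyse how these elements fuse the $\langle\sigma\rangle$-orbits on $\{1,\dots,3n-1\}$, namely the singleton $\{3n-1\}$ and the orbits of $x\mapsto 3x$ on $\Z_{3n-1}\setminus\{0\}$ (whose lengths are the multiplicative orders of $3$ modulo the various divisors of $3n-1$). A direct computation gives $\sigma^{-1}\rho_\tau\sigma=\prod_{t=0}^{n-1}(3t+1\ \ 3t+2)$; the transposition $(3n-2\ \ 3n-1)$ appearing here fuses the orbit of the fixed point $3n-1$ with that of $3n-2\equiv -1$, other transpositions occurring in the $\sigma^{-j}\rho_\tau\sigma^{j}$ fuse further pairs of orbits, and conjugating by powers of $\sigma$ spreads these fusions over all residues. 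The target is to prove that the graph on $\{1,\dots,3n-1\}$ whose edges come from the $\times 3$-action of $\sigma$ and from all transpositions occurring in $\rho_\tau$, the $\sigma^{-j}\rho_\tau\sigma^{j}$, and the further elements of $G_0$ above, is connected --- this is exactly transitivity of $G_0$ on $[3n]\setminus\{0\}$, hence $2$-transitivity of $G$.

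The hypothesis that $n$ is not a power of $3$ enters precisely here: when $n=3^{e}$ this graph is disconnected, which is just the imprimitive wreath structure of $G_{3,3^{e+1}}$ from \cite{MM1987}; so the crux of the argument must use a prime divisor $p\ne 3$ of $n$ to force connectivity, through the interplay between the $p$-part and the $3$-part of $3n-1$ and the fusions above. I expect this to be the main obstacle, and establishing it uniformly over all admissible $n$ seems to require splitting into cases according to the $3$-part of $n$, controlling the $3$-adic and $p$-adic valuations of $3n-1$ (for instance via lifting-the-exponent), and checking that the bridging transpositions really do link distinct $\langle\sigma\rangle$-orbits. For any small residual values of $n$ a complementary route is to show first that $G$ is primitive --- again using the elements $\sigma^{-j}\rho_\tau\sigma^{j}$ and the non-power hypothesis to destroy every nontrivial block system containing $0$ --- and then to rule out, via the classification of finite primitive groups together with lower bounds on $|G|$ coming from $\langle\sigma\rangle$ and from the subgroup $\langle\,\rho_\pi,\ \sigma^{-1}\rho_\pi\sigma : \pi\in\Sym([3])\,\rangle$, every primitive group of degree $3n$ that is not $2$-transitive, settling the remaining degrees directly. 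Once the $2$-transitivity of $G_{3,3n}$ is established, the reduction of \cite{XZZ} upgrades it to $A_{3n}\le G_{3,3n}$, which together with \cite{XZZ} completes the classification of all shuffle groups.
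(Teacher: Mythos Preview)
Your setup is close to the paper's: both work inside a two-generated subgroup $\langle\sigma,\rho_\tau\rangle$ of a point stabilizer. The paper fixes $3n-1$ and uses $\tau=(0\,1)$, while you fix $0$ and use $\tau=(1\,2)$; these are conjugate choices and the difference is cosmetic. From this point on, however, your strategy and the paper's diverge sharply, and your proposal does not actually carry out the argument.

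The paper's proof is fully constructive. Writing $n=3^{s}t$ with $t>1$ and $3\nmid t$, it encodes each $x\in[3n]$ in the mixed-radix form $(x_s,\dots,x_0;X)$ with $x_i\in[3]$ and $X\in[t]$, derives explicit formulas for how the elements $\alpha_i=\sigma^{i}\rho_{(01)}\sigma^{-i}$ (which flip the digit $x_{s-i}$ between $0$ and $1$) and $\beta=\sigma^{-1}\rho_{(01)}\sigma$ (which shifts $x$ by $\pm1$ depending on a residue) act, and then argues by induction on the invariant $T(x)=|\{i:x_i=2\}|$. Three explicit steps reduce first to $[t]$, then dispose of all $x$ with $T(x)=0$, and finally, via a preparatory lemma that brings any $x$ with $1\le T(x)\le s$ to one with $x_0=0,x_1=2$, exhibit words in $\alpha_i,\beta,\sigma^{\pm1}$ that drop $T(x)$ by one, splitting into the cases $t\equiv1$ and $t\equiv2\pmod 3$. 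Nothing beyond elementary arithmetic is used.

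By contrast, your proposal identifies the right reduction (transitivity of the stabilizer) and correctly describes the $\langle\sigma\rangle$-orbits as multiplicative orbits of $3$ on $\Z_{3n-1}$, but then stops at the decisive step. ``Splitting into cases according to the $3$-part of $n$, controlling valuations via lifting-the-exponent, and checking that the bridging transpositions really do link distinct orbits'' is a description of work to be done, not a proof; you even flag it as ``the main obstacle''. Lifting-the-exponent controls $v_p(3^m-1)$, not the multiplicative order of $3$ modulo an arbitrary divisor of $3n-1$, so it is unclear it would organize the orbit structure at all; and you give no mechanism that guarantees the transpositions in $\sigma^{-j}\rho_\tau\sigma^{j}$ connect \emph{every} pair of $\langle\sigma\rangle$-orbits rather than merely some of them. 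The suggested fallback through primitivity plus the classification of primitive groups is likewise only sketched, would rely on CFSG where the paper needs none, and still requires a proof of primitivity that you have not supplied. In short, the missing idea is an invariant (like the paper's $T(x)$) or an explicit connectivity argument that actually forces every nonzero point into the orbit of $1$; without it the proposal is an outline, not a proof.
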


The classification theorem for $G_{2,2n}$ can be seen in \cite{DGK1983}. For completeness, we summarize the classification theorem for $G_{k,kn}$ with $k\ge3$ as follows.
\begin{theorem}
\label{classification}
Let $G_{k,kn}$ be the shuffle group on $kn$ cards where $k\geq3$. Then the following statements hold.
\begin{enumerate}
  \item If $n=k^{e}$ ($e\ge0$), then $G_{k,kn}$ is the primitive wreath product of $\Sym([k])$ by the cyclic group of order $e+1$.
  \item If $k=4$ and $n=2^{2e-1}$ ($e\ge1$), then $G_{k,kn}$ is $(\mathbb{Z}_2)^{2e+1}\rtimes \mathrm{GL}(2e+1,2)$, the affine group of an $(2e+1)$-dimensional vector space over the field of order $2$.
  \item If $n\equiv2\pmod4$ and $k\equiv0~\mbox{or}~1\pmod4$, or $n\equiv0 \pmod4$ and $n$ is not a power of $k$, then $G_{k,kn}$ is the alternating group $\mathrm{Alt}([kn])$.
  \item In all other cases, $G_{k,kn}$ is the symmetric group $\Sym([kn])$.
\end{enumerate}
\end{theorem}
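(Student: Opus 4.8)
The plan is to assemble Theorem \ref{classification} from the results quoted in the introduction together with the new Theorem \ref{3n}, reading the four cases in order so that each applies only to the pairs $(k,n)$ not already covered by the earlier ones (in particular, ``in all other cases'' in part (iv) means all pairs not governed by (i)--(iii)). Parts (i) and (ii) are immediate citations. For (i), writing $n=k^e$ gives $kn=k^{e+1}$, and Medvedoff and Morrison \cite{MM1987} identify $G_{k,k^{e+1}}$ with the primitive wreath product of $\Sym([k])$ by the cyclic group of order $e+1$. For (ii), $k=4$ and $n=2^{2e-1}$ give $kn=2^{2e+1}$, an odd power of $2$, so Cohen, Harmse, Morrison and Wright \cite{CHMW} identify $G_{4,2^{2e+1}}$ with $(\mathbb{Z}_2)^{2e+1}\rtimes\GL(2e+1,2)$.

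The real content lies in parts (iii) and (iv), which jointly cover every pair $(k,n)$ with $k\ge3$ not falling under (i) or (ii). For such a pair, $n$ is not a power of $k$, and $(k,n)\ne(4,2^f)$ for any $f$ --- an even $f$ would make $n=2^f$ a power of $4$ (case (i)), and an odd $f$ is case (ii) --- so the hypotheses of Conjecture \ref{shuffle} are satisfied. The first step is to prove that $G_{k,kn}$ is $2$-transitive for every such pair, which I would do by the dichotomy of \cite{XZZ}: if $k\ge4$ or $k\nmid n$, then $2$-transitivity is exactly what Xia, Zhang and Zhu established; the only remaining possibility under $k\ge3$ is $k=3$ with $3\mid n$, and since $n$ is not a power of $3$ here, Theorem \ref{3n} supplies it. Hence $2$-transitivity holds throughout the range of (iii) and (iv), and feeding this into the reduction of \cite{XZZ} --- that $2$-transitivity of $G_{k,kn}$ implies Conjecture \ref{shuffle}, i.e.\ $A_{kn}\le G_{k,kn}$ --- yields $A_{kn}\le G_{k,kn}$ for all these pairs.

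It then remains only to decide whether $G_{k,kn}$ equals $A_{kn}$ or $\Sym([kn])$. Since $n\ge2$ forces $kn\ge6$ and $[\Sym([kn]):A_{kn}]=2$, the inclusion $A_{kn}\le G_{k,kn}$ leaves exactly the two possibilities $A_{kn}$ and $\Sym([kn])$, distinguished solely by whether $G_{k,kn}\le A_{kn}$. Here I would invoke the parity criterion recorded in \cite[Theorem 1]{MM1987} and quoted in the introduction: $G_{k,kn}\le A_{kn}$ exactly when $n\equiv2\pmod4$ with $k\equiv0$ or $1\pmod4$, or $n\equiv0\pmod4$. When this holds, $G_{k,kn}=A_{kn}$, giving case (iii); when it fails, $G_{k,kn}\not\le A_{kn}$ and so $G_{k,kn}=\Sym([kn])$, giving case (iv).

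The argument is therefore essentially an assembly of cited theorems on top of Theorem \ref{3n}, and its delicate points are organisational rather than computational. The step I would watch most carefully is the completeness of the $2$-transitivity coverage --- that the cases ``$k\ge4$ or $k\nmid n$'' of \cite{XZZ} and ``$k=3$, $3\mid n$, $n$ not a power of $3$'' of Theorem \ref{3n} together exhaust the range of (iii) and (iv) --- together with the correct excision of the exceptional families first, since a pair such as $(k,n)=(4,2^{2e-1})$ does satisfy the parity condition of (iii) and so would be misclassified were case (ii) not given precedence.
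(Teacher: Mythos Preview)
Your assembly is correct and matches the paper's approach: the paper does not write out a separate proof of Theorem~\ref{classification} but presents it as a summary consequence of the cited results (Medvedoff--Morrison for (i), Cohen--Harmse--Morrison--Wright for (ii), the parity criterion of \cite{MM1987}, and the reduction and $2$-transitivity results of \cite{XZZ}) together with Theorem~\ref{3n}, with the cases read in order so that earlier ones take precedence. Your discussion of why the $2$-transitivity coverage is complete and why the exceptional families must be excised first is exactly the implicit logic of the introduction.
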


We will fix some notations in the next section and give the proof of Theorem \ref{3n} in Section 3.

\section{Preliminaries}
Throughout this section, let $n=3^st$ where $s$ and $t$ are integers satisfying $s>0$, $t>1$ and $3\nmid t$. For a nonnegative integer $m$ and a positive integer $\ell$, we use $[m]_{\ell}^{0}$ to denote the remainder of $\ell$ dividing $m$ and $[m]_{\ell}^{1}$ to denote the quotient of $\ell$ dividing $m-[m]_{\ell}^{0}$. Then $m=\ell[m]_{\ell}^{1}+[m]_{\ell}^{0}$. For every $x\in[3n]$ (note that $0\le x<3^{s+1}t$), we write $[x]_{t}^{1}$ in base $3$ as follows: $[x]_{t}^{1}=3^sx_s+\cdots+3x_1+x_0$ where $x_i\in[3]$ for every $i\in[s+1]$.
Therefore $x$ can be uniquely written as
\begin{equation*}
x=(3^sx_s+\cdots+3x_1+x_0)t+[x]_{t}^{0}.
\end{equation*}
For convenience, we identify $x$ with $(x_s,\ldots,x_1,x_0;X)$ where $X=[x]_{t}^{0}$. Note that sometimes we mix the two notations when doing addition. For example,
\begin{equation*}
(x_s,\ldots,x_3,0,1,1;t-1)+3^2t+2=(x_s,\ldots,x_3,1,1,2;1).
\end{equation*}

Now consider the shuffle group $G_{3,3n}=\langle\sigma,\rho_{\tau}\mid\tau\in \Sym([3])\rangle$ where $\sigma$ is the standard shuffle of a deck of $3n$ cards being evenly divided into $3$ piles. Recall that
\begin{equation*}
  (i+jn)^{\sigma}=3i+j~\mbox{and}~(i+jn)^{\rho_{\tau}}=i+j^{\tau}n
\end{equation*}
for all $i\in[n]$ and $j\in[3]$. Therefore
\begin{equation}\label{sigma}
  \begin{split}
     (x_s,\ldots,x_1,x_0;X)^{\sigma}=& (x_{s-1},\ldots,x_0,[3X+x_s]_{t}^{1};[3X+x_s]_{t}^{0}) \\
  = & (x_{s-1},\ldots,x_0,0;0)+3X+x_s,
  \end{split}
\end{equation}
\begin{equation}
\label{inverse}
(x_s,\ldots,x_1,x_0;X)^{\sigma^{-1}}
=([x_0t+X]_{3}^{0},x_{s},\ldots,x_2,x_1;[x_0t+X]_{3}^{1})
\end{equation}
and
\begin{equation}\label{ta}
(x_s,\ldots,x_1,x_0;X)^{\rho_{\tau}}
=(x_{s}^{\tau},\ldots,x_1,x_0;X)
\end{equation}
for every $(x_s,\ldots,x_1,x_0;X)\in[3n]$. By using Eq. (\ref{sigma})---(\ref{ta}), it is straightforward to calculate that
\begin{equation}
\label{spower}
(x_s,\ldots,x_1,x_0;X)^{\sigma^{i}}  =(x_{s-i},\ldots,x_{1},x_{0},0,\ldots,0;0)
+3^iX+\sum_{j=0}^{i-1}3^{i-1-j}x_{s-j}
\end{equation}
and
\begin{equation}
\label{tau}
(x_s,\ldots,x_1,x_0;X)^{\sigma^{i}\rho_{\tau}\sigma^{-i}}
=(x_s,\ldots,x_{s-i+1},x_{s-i}^{\tau},x_{s-i-1},
\ldots,x_1,x_0;X).
\end{equation}

\section{Proof of Theorem \ref{3n}}
Consider the subgroup $H:=\langle\sigma,\rho_{(01)}\rangle$ of $G_{3,3n}$ where $n=3^st$ with $s>0$, $t>1$ and $3\nmid t$. It is obvious that $H$ is contained in the stabilizer of $3n-1$. Our strategy for proving Theorem \ref{3n} is to prove that $H$ is transitive on $[3n-1]$. Let $x=(x_s,x_{s-1},\ldots,x_0;X)\in[3n]$. For every $i\in[s+1]$, set
\begin{equation*}
\bar{x}_i=\left\{
           \begin{array}{ll}
             1, & \hbox{if}~x_i=0; \\
             0, & \hbox{if}~x_i=1;  \\
             2, & \hbox{if}~x_i=2.
           \end{array}
         \right.
\end{equation*}
Write $\alpha_i=\sigma^{i}\rho_{(01)}\sigma^{-i}$ for every $i\in[s+1]$.
By Eq. (\ref{tau}) we have
\begin{equation*}
x^{\alpha_i}
=(x_s,\ldots,x_{s-\ell+1},\bar{x}_{s-i},x_{s-i-1},\ldots,x_0;X).
\end{equation*}
Set $\beta=\sigma^{-1}\rho_{(01)}\sigma$.
By using Eq. (\ref{sigma})---(\ref{ta}), it is straightforward to check that
\begin{equation*}
x^\beta=\left\{
           \begin{array}{ll}
             x+1, & \hbox{if}~[x_0t+X]_{3}^{0}=0; \\
             x-1, & \hbox{if}~[x_0t+X]_{3}^{0}=1; \\
             x, & \hbox{if}~[x_0t+X]_{3}^{0}=2.
           \end{array}
         \right.
\end{equation*}
The reader should keep in mind that we will use the above two formulas repeatedly without giving a refer in the proof of Theorem \ref{3n}.

\smallskip
Set $T(x)=|\{i\in[s+1]\mid x_i=2\}|$. The following lemma will be used in the proof of Theorem \ref{3n}.
\begin{lem}
\label{xy}
Let $x=(x_{s},x_{s-1},\ldots,x_1,x_0;X)\in [3n-1]$. If $1\le T(x)<s+1$, then there exists $y=(y_{s},y_{s-1},\ldots,y_1,y_0;Y)\in x^{H}$ such that either $T(y)=0$ or $y_0=0$, $y_1=2$ and $T(x)\geq T(y)$.
\end{lem}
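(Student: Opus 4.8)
The plan is to reduce $x$ step by step, using the three explicit generators $\alpha_i$, $\beta$ and powers of $\sigma$, so that after finitely many moves we land on an element whose base-$3$ digit string is in a very restricted form. The key observation is that $\beta$ only modifies the ``tail'' $X$ (together, possibly, with the low digit $x_0$ via a carry), whereas $\alpha_i$ flips the digit $x_{s-i}$ between $0$ and $1$ while fixing everything else including $X$; only $\sigma$ and $\sigma^{-1}$ genuinely rotate the digit string. So $T$, the number of digits equal to $2$, is preserved by every $\alpha_i$, is not increased by any product of $\sigma^{\pm1}$ that returns the string to a cyclic shift without creating a carry into a $1$-or-$2$ digit, and is the natural monovariant to track. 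Under the hypothesis $1 \le T(x) < s+1$ there is at least one digit equal to $2$ and at least one digit not equal to $2$; I would first conjugate by a suitable power of $\sigma$ (using Eq.~(\ref{spower}) and Eq.~(\ref{tau})) to rotate a $2$-digit into a controlled position, say to make $x_1 = 2$, and then use the $\alpha_i$'s and $\beta$ to clear $x_0$ to $0$ without disturbing $x_1$; this yields a $y \in x^H$ with $y_0 = 0$, $y_1 = 2$ and $T(y) \le T(x)$, which is the second alternative in the conclusion. The first alternative, $T(y)=0$, should appear as the degenerate sub-case where in the course of this reduction every $2$ can be rotated out past the ``carry boundary'' and absorbed, e.g. when applying $\sigma$ or $\sigma^{-1}$ produces, via the formula for the tail $[3X+x_s]_t$ or $[x_0t+X]_3$, a genuine decrease in the count of $2$'s.

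Concretely, here is the order I would carry out the argument. First, isolate the positions: let $a$ be the largest index with $x_a = 2$ and let $b$ be some index with $x_b \ne 2$ (both exist by hypothesis). Second, use a power of $\sigma$ to cyclically move digit $a$ down to position $1$; by Eq.~(\ref{spower}) this is a clean rotation of the digit string \emph{provided} the accompanying arithmetic on $X$ does not force a carry that alters a digit, and the case where it does force such a carry is exactly where one can argue $T$ drops or the structure simplifies, eventually landing in the $T(y)=0$ alternative. Third, once $y_1 = 2$ is arranged, examine $y_0$: if $y_0 = 0$ we are already done; if $y_0 = 1$, apply $\alpha_s$ (which flips $y_0$ from $1$ to $0$ and fixes $y_1$, by the displayed formula for $x^{\alpha_i}$), landing in the desired form; if $y_0 = 2$, first use $\beta$ and the carry mechanism (the case $[y_0 t + Y]_3^0 = 0$ gives $y \mapsto y+1$, which rolls a $t-1$ tail over into an increment of the digit string) or a further $\sigma$-conjugation to convert that $2$ to a $0$ or $1$, at the possible cost of nothing to $T$, and then clear it with $\alpha_s$. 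Throughout, I would keep a ledger showing $T$ never increases, so the final $y$ satisfies $T(x) \ge T(y)$.

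The main obstacle, as I see it, is controlling the interaction between the digit rotations effected by $\sigma^{\pm1}$ and the tail $X$: the maps $\sigma$ and $\sigma^{-1}$ do not act on the digits alone but mix in $X$ through the base-$3$ division in Eq.~(\ref{sigma}) and Eq.~(\ref{inverse}), so a naive ``rotate the $2$ into position'' can deposit an unwanted carry onto a neighbouring digit and spoil either the target form $y_1 = 2$, $y_0 = 0$ or the monotonicity of $T$. Managing this will require a careful case split on the residue $[x_0 t + X]_3^0$ (equivalently on how $X$ sits modulo $3$ and modulo $t$) and on whether the digit being rotated past the boundary is $0$, $1$, or $2$; in the awkward residue classes one has to either first adjust $X$ using $\beta$ (which is available precisely because $\beta$ moves $x$ by $\pm 1$ when the residue is $0$ or $1$) to put oneself in a good class before rotating, or else accept a controlled alteration of one digit and show it still leads to one of the two allowed outcomes. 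The $t > 1$, $3 \nmid t$ hypotheses enter here: $3 \nmid t$ guarantees the residues $[x_0 t + X]_3^0$ genuinely range over all of $[3]$ as $x_0$ varies (so $\beta$ and the $\alpha_i$'s together give enough flexibility on the tail), and $t > 1$ ensures the tail block $X$ is nontrivial so that increments by $\pm1$ propagate as claimed rather than immediately overflowing into the digit string.
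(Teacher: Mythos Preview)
Your general intuition---rotate the digit string via powers of $\sigma^{\pm1}$ while controlling the new digit that appears at position $s$, and use the $\alpha_i$ to flip $0/1$ digits---is right, but two concrete choices in the outline do not work as stated.

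First, the tool you reach for to ``adjust $X$'' in the awkward residue class is $\beta$, and this is the wrong lever. When $[x_0 t + X]_3^0 = 2$ the map $\beta$ fixes $x$ entirely, so it cannot move you out of the bad class; it only bounces between residues $0$ and $1$. The correct move is the one you mention in passing at the very end: if $x_0 \ne 2$, apply $\alpha_s$ to toggle $x_0$ between $0$ and $1$. Since $3\nmid t$, the two residues $[0\cdot t + X]_3^0$ and $[1\cdot t + X]_3^0$ differ, so at least one is $\ne 2$, and that choice makes the next $\sigma^{-1}$ deposit a non-$2$ at the top. This is what keeps $T$ from increasing during rotation, and it has to be invoked before \emph{each} $\sigma^{-1}$, not only at the end.

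Second, rotating the \emph{largest} index $a$ with $x_a = 2$ into position $1$ forces you into the sub-case $y_0 = 2$ whenever $x_{a-1} = 2$, and your proposed fix there does not resolve it: $\beta$ changes $Y$ by $\pm1$ without touching $y_0 = 2$ unless $Y\in\{0,t-1\}$, and one more $\sigma^{-1}$ destroys $y_1 = 2$. The paper avoids this by working from the bottom instead: first apply $\sigma^{-\ell}$ where $\ell$ is the \emph{smallest} index with $x_\ell \ne 2$, so the resulting $z$ has $z_0 \ne 2$ (and $T(z)\le T(x)$, since each of the $\ell$ digits rotated out was a $2$). Then, with $j$ the smallest index satisfying $z_{j+1} = 2$, all of $z_0,\ldots,z_j$ are $\ne 2$, so the $\alpha_s$-then-$\sigma^{-1}$ trick above applies $j$ times in a row with $T$ exactly preserved, landing on $y$ with $y_1 = z_{j+1} = 2$ and $y_0 = z_j \in\{0,1\}$; one $\alpha_{s-j}$ applied to $z$ beforehand (or $\alpha_s$ afterward) makes $y_0 = 0$. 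Targeting the lowest $2$ rather than the highest is precisely what guarantees the digit just below it is not a $2$, eliminating your problematic sub-case.
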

\begin{proof}
Let $\ell$ be the minimum integer such that $x_\ell\ne2$. Set
\begin{equation*}
z=:x^{\sigma^{-\ell}}=(z_{s},z_{s-1},\ldots,z_1,z_0;Z).
\end{equation*}
By Eq. (\ref{inverse}), we have $z_{s-\ell}=x_s, z_{s-\ell-1}=x_{s-1},\ldots,z_{1}=x_{\ell+1},z_0=x_{\ell}$. Since $x_0=\cdots=x_{\ell-1}=2$, we have $T(x)\ge T(z)$. If $T(z)=0$, then we confirm the lemma by setting $y:=z$. In what follows, we assume $T(z)>0$.

Since $z_0=x_{\ell}\ne2$ and $3\nmid t$, either $[z_0t+Z]_{3}^{0}\ne 2$ or $[\bar{z}_0t+Z]_{3}^{0}\ne 2$. Set
\begin{equation*}
\mu_0=\left\{
  \begin{array}{ll}
    \sigma^{-1}, & \hbox{if}~[z_0t+Z]_{3}^{0}\ne2; \\
    \alpha_{s}\sigma^{-1}, & \hbox{if}~[z_0t+Z]_{3}^{0}=2.
  \end{array}
\right.
\end{equation*}
By Eq. (\ref{inverse}) and (\ref{tau}), we get $T(z^{\mu_0})=T(z)$. Similarly, if $z_1\ne2$, then there exists $\mu_1\in H$ such that $T(z^{\mu_0\mu_1})=T(z^{\mu_0})$.

\smallskip
Let $j$ be the minimum integer such that $z_j\ne2$ and $z_{j+1}=2$. Without loss of generality, we assume $z_j=0$ (as we can replace $z$ with $z^{\alpha_{s-j}}$ if $z_j=1$). Then we obtain the following result recursively: There exist $\mu_0,\ldots,\mu_{j-1}\in H$ such that
\begin{equation*}
T(z^{\mu_0\mu_1\cdots\mu_{j-1}})=\cdots=T(z^{\mu_0\mu_1})=T(z^{\mu_0})=T(z).
\end{equation*}
Set
\begin{equation*}
y=:z^{\mu_0\mu_1\cdots\mu_{j-1}}=(y_{s},y_{s-1},\ldots,y_1,y_0;Y).
\end{equation*}
Then $T(y)=T(z)$, $y_0=z_j=0$ and $y_1=z_{j+1}=2$. Since $z\in x^H$ and $T(x)\ge T(z)$, we have $y\in x^H$ and $T(x)\ge T(y)$.
\end{proof}

\medskip
Now we are ready to rove Theorem \ref{3n}.

\smallskip
\begin{proof}[Proof of Theorem \ref{3n}]
Let $n=3^st$ where $s>0$, $t>1$ and $3\nmid t$.
Let $H$ be the subgroup of $G_{3,3n}$ generated by $\sigma$ and $\rho_{(01)}$. Then $H$ is contained in the stabilizer of $3n-1$ in $G_{3,3n}$. Note that $G_{3,3n}$ is $2$-transitive if and only if $H$ is transitive on $[3n-1]$. It suffices to prove that $[3n-1]\subseteq 0^H$.  Let
\begin{equation*}
x=(x_{s},x_{s-1},\ldots,x_1,x_0;X)\in [3n-1]
\end{equation*}
where $X\in[t]$ and $x_i\in[3]$ for all $i\in[s+1]$. We will complete the proof in three steps as follows.

\medskip
\textsf{Step 1.}  Show that $[t]\subseteq0^{H}$.

\smallskip
Let $x\in[t]^{*}$. Then $x=X$ and $x_i=0$ for every $i\in[s+1]$.
If $[x]_{3}^{0}=0$, then
 $x^{\sigma^{-1}} = \frac{x}{3}$.
If $[x]_{3}^{0}=1$, then $x^{\beta}=x-1$.
If $[x]_{3}^{0}=2$ and $[t]_{3}^{0}=2$ , then $[t+x]_{3}^{0}=1$  and therefore
\begin{eqnarray*}
 x^{\alpha_{s}\beta\alpha_{s}}
=(t+x)^{\beta\alpha_{s}}=(t+x-1)^{\alpha_{s}}= x-1.
\end{eqnarray*}
If $[x]_{3}^{0}=2$  and $[t]_{3}^{0}=1$ , then $[t+x]_{3}^{0}=0$  and therefore
\begin{eqnarray*}
 x^{\alpha_{s}\beta\alpha_{s}\sigma^{-1}}= (t+x)^{\beta\alpha_{s}\sigma^{-1}}=(t+x+1)^{\alpha_{s}\sigma^{-1}}
=(x+1)^{\sigma^{-1}}=\frac{x+1}{3}.
\end{eqnarray*}
Note that we have proved that $x^H$ always contains an integer less than $x$ for all  $x\in[t]^{*}$. Therefore $[t]\subseteq0^{H}$.

\medskip
\textsf{Step 2.}  Show that $x\in0^{H}$ if $x\notin[t]$ and
$x_i\ne2$ for all $i\in[s+1]$.

\smallskip

Assume that $x\notin[t]$ and $x_i\ne2$ for all $i\in[s+1]$.
Then there exists a nonempty subset $I$ of $[s+1]$ such that $x_i=1$ if $i\in I$ and $x_i=0$ if $i\in [s+1]\setminus I$. Since $[t]\subseteq0^{H}$ and
$x^{\prod_{i\in I}\alpha_{s-i}}=X\in[t]$,
we have $x\in0^{H}$.

\medskip
\textsf{Step 3.}  Show that $x\in 0^{H}$ for all $x\in[3n-1]$.

\smallskip

Recall that $T(x)=|\{i\in[s+1]\mid x_i=2\}|$. We proceed by induction on $T(x)$ to show that $x\in 0^{H}$ for all $x\in[3n-1]$. It has been proved that $x\in 0^{H}$ if $T(x)=0$ in the the first two steps. Now let $T(x)\geq1$ and suppose that $y\in 0^{H}$ for all $y\in[3n-1]$ satisfying $T(y)<T(x)$.
We will complete the proof by verifying that there exists $b\in x^H$ such that $T(b)<T(x)$.

\smallskip
If $T(x)=s+1$, then $x_i=2$ for every $i\in[s+1]$. Therefore
\begin{equation*}
x=2(3^s+\cdots+3+1)t+X=3^{s+1}t-t+X=3n-(t-X).
\end{equation*}
By Eq. (\ref{spower}), we have
\begin{align*}
 x^{\sigma^{s+1}}& =3^{s+1}X+3^{s}x_s+3^{s-1}x_{s-1}+
   \cdots+3x_{1}+x_{0} \\
 & =3^{s+1}X+2(3^{s}+3^{s-1}+
   \cdots+3+1) \\
 & =3^{s+1}(X+1)-1.
\end{align*}
Since $x=3n-(t-X)<3n-1$, we have $X<t-1$ and it follows that
\begin{equation*}
x-x^{\sigma^{s+1}}=3^{s+1}t-t+X-3^{s+1}(X+1)+1=
(3^{s+1}-1)(t-X-1)>0.
\end{equation*}
Thus $x>x^{\sigma^{s+1}}$.
Similarly, if $T(x^{\sigma^{i(s+1)}})=s+1$ for some positive integer $i$,
then $x^{\sigma^{i(s+1)}}>x^{\sigma^{(i+1)(s +1)}}$. Since $x$ is finite, there exists a positive integer $\ell$ such that $T(x^{\sigma^{\ell(s+1)}})<s+1$. Set $b=x^{\sigma^{\ell(s+1)}}$. Then $b\in x^H$ and $T(b)<T(x)$.

\smallskip
Now we assume that $1\le T(x)<s+1$.  By Lemma \ref{xy}, we can further assume $x_0=0$ and $x_1=2$. Then $x=(x_s,x_{s-1},\ldots,x_2,2,0;X)$. Since $3\nmid t$, we have $[t]_{3}^{0}=1~\mbox{or}~2$. So we can proceed with the proof in two cases.

\medskip
\textsf{Case 1.} $[t]_{3}^{0}=2$.

\smallskip
First we have the following observations.
\begin{enumerate}
  \item[(a)] If $[X]_{3}^{0}=0$, then
 \begin{equation*}
  x=(x_s,x_{s-1},\ldots,x_2,2,0;X)\equiv0\pmod3,
 \end{equation*}
and
\begin{align*}
  x^{\beta\alpha_s}= &(x_s,x_{s-1},\ldots,x_2,2,0;X)^{\beta\alpha_s}\\
   = &(x_s,x_{s-1},\ldots,x_2,2,0;X+1)^{\alpha_s}\\
   = &(x_s,x_{s-1},\ldots,x_2,2,1;X+1)\equiv0\pmod3.
\end{align*}
  \item[(b)] If $[X]_{3}^{0}=1$, then
\begin{align*}
  x^{\alpha_s}= &(x_s,x_{s-1},\ldots,x_2,2,1;X)\equiv0\pmod3
\end{align*}
and
\begin{align*}
  x^{\beta}
 = &(x_s,x_{s-1},\ldots,x_2,2,0;X-1)\equiv0\pmod3.
\end{align*}
  \item[(c)] If $[X]_{3}^{0}=2$, then $[t+X]_{3}^{0}=1$ and therefore
\begin{align*}
  x^{\alpha_s\beta}
 = &(x_s,x_{s-1},\ldots,x_2,2,0;X)^{\alpha_s\beta} \\
 = &(x_s,x_{s-1},\ldots,x_2,2,1;X)^{\beta} \\
  = & (x_s,x_{s-1},\ldots,x_2,2,1;X-1)\equiv0\pmod3
\end{align*}
and
\begin{align*}
  x^{\alpha_s\beta\alpha_s\beta}
  = & (x_s,x_{s-1},\ldots,x_2,2,1;X-1)^{\alpha_s\beta}\\
  = & (x_s,x_{s-1},\ldots,x_2,2,0;X-1)^{\beta}\\
  = & (x_s,x_{s-1},\ldots,x_2,2,0;X-2)\equiv0\pmod3.
\end{align*}
\end{enumerate}
Set
\begin{equation*}
  y=(0,x_s,x_{s-1},\ldots,x_2,2;Y)
\end{equation*}
and
\begin{equation*}
  z=(0,x_s,x_{s-1},\ldots,x_2,2;Z)
\end{equation*}
where
\begin{equation*}
Y=\left\{
  \begin{array}{ll}
    X/3, & \hbox{if}~[X]_{3}^{0}=0; \\
    (X-1)/3, & \hbox{if}~[X]_{3}^{0}=1; \\
    (X-2)/3, & \hbox{if}~[X]_{3}^{0}=2 \\
  \end{array}
\right.
\end{equation*}
and
\begin{equation*}
Z=\left\{
  \begin{array}{ll}
    (t+X+1)/3, & \hbox{if}~[X]_{3}^{0}=0; \\
    (t+X)/3, & \hbox{if}~[X]_{3}^{0}=1; \\
    (t+X-1)/3, & \hbox{if}~[X]_{3}^{0}=2. \\
  \end{array}
\right.
\end{equation*}
By observations (a)--(c), we have $y,z\in x^{H}$.

\smallskip
If $[2t+Y]_{3}^{0}\ne2$, then we set $b=y^{\sigma^{-1}}$. If $[2t+Y]_{3}^{0}=2$ but $[2t+Z]_{3}^{0}\ne2$ then we set $b=z^{\sigma^{-1}}$. In either cases, we have $b\in x^H$ and $T(b)=T(x)-1$.

\smallskip
Now we assume $[2t+Y]_{3}^{0}=[2t+Z]_{3}^{0}=2$. Since $(t+1)/3=Z-Y$, it follows that $[(t+1)/3]_{3}^{0}=[Z-Y]_{3}^{0}=0$.
Set
\begin{equation*}
  w=(x_s,x_{s-1},\ldots,x_2,2,1;3Y).
\end{equation*}
Then
\begin{equation*}
  w^{\sigma^{-1}}=(2,x_s,x_{s-1},\ldots,x_2,2;(t-2)/3+Y).
\end{equation*}
Since $w=y^{\sigma\alpha_s}$, we get $w\in x^H$.
Since $[2t+Y]_{3}^{0}=2$ and $[(t+1)/3]_{3}^{0}=0$, we have
\begin{equation*}
2t+(t-2)/3+Y=2t+Y+(t+1)/3-1\equiv1\pmod3
\end{equation*}
 and it follows that
\begin{equation*}
  w^{\sigma^{-1}\beta}=(2,x_s,x_{s-1},\ldots,x_2,2;(t-2)/3+Y-1).
\end{equation*}
Thus
\begin{eqnarray*}
  w^{\sigma^{-1}\beta\sigma\alpha_{s}\sigma^{-1}} &=& (2,x_s,x_{s-1},\ldots,x_2,2;(t-2)/3+Y-1)^{\sigma\alpha_{s}\sigma^{-1}} \\
 &=& (x_s,x_{s-1},\ldots,x_2,2,1;3Y-3)^{\alpha_{s}\sigma^{-1}} \\
 &=& (x_s,x_{s-1},\ldots,x_2,2,0;3Y-3)^{\sigma^{-1}} \\
 &=& (0,x_s,x_{s-1},\ldots,x_2,2;Y-1).
\end{eqnarray*}
Set $b=w^{\sigma^{-1}\beta\sigma\alpha_{s}\sigma^{-2}}$. Since $[2t+Y]_{3}^{0}=2$, we have $[2t+Y-1]_{3}^{0}=1$ and therefore
\begin{eqnarray*}
  b &=& (0,x_s,x_{s-1},\ldots,x_2,2;Y-1)^{\sigma^{-1}} \\
   &=& (1,0,x_s,x_{s-1},\ldots,x_2;(2t+Y-2)/3).
\end{eqnarray*}
Thus $b\in x^H$ and $T(b)=T(x)-1$.

\medskip
\textsf{Case 2.} $t\equiv1\pmod3$.

\smallskip
Set
\begin{equation*}
  y=(0,x_s,x_{s-1},\ldots,x_2,2;Y)
\end{equation*}
and
\begin{equation*}
  z=(0,x_s,x_{s-1},\ldots,x_2,2;Z)
\end{equation*}
where
\begin{equation*}
Y=\left\{
  \begin{array}{ll}
    X/3, & \hbox{if}~X\equiv0\pmod3; \\
    (X-1)/3, & \hbox{if}~X\equiv1\pmod3; \\
    (X+1)/3, & \hbox{if}~X\equiv2\pmod3 \\
  \end{array}
\right.
\end{equation*}
and
\begin{equation*}
Z=\left\{
  \begin{array}{ll}
    (t+X-1)/3, & \hbox{if}~X\equiv0\pmod3; \\
    (t+X-2)/3, & \hbox{if}~X\equiv1\pmod3; \\
    (t+X)/3, & \hbox{if}~X\equiv2\pmod3. \\
  \end{array}
\right.
\end{equation*}
Similar to the discussion in \textsf{Case 1.}, we have $y,z\in x^{H}$.

\smallskip
If $[2t+Y]_{3}^{0}\ne2$, then we set $b=y^{\sigma^{-1}}$.
If $[2t+Y]_{3}^{0}=2$ but $[2t+Z]_{3}^{0}\ne2$ then we set $b=z^{\sigma^{-1}}$. In either cases, we have $b\in x^H$ and $T(b)=T(x)-1$.

\smallskip
Now we assume $[2t+Y]_{3}^{0}=[2t+Z]_{3}^{0}=2$. Then  $[(t-1)/3]_{3}^{0}=[Z-Y]_{3}^{0}=0$.
Set
\begin{equation*}
  w=(x_s,x_{s-1},\ldots,x_2,2,0;3Y-1).
\end{equation*}
Then
\begin{equation*}
  w^{\sigma^{-1}}=(2,x_s,x_{s-1},\ldots,x_2,2;Y-1).
\end{equation*}
Since $w=y^{\sigma\alpha_{s}\beta\alpha_{s}}$, we have $w\in x^H$.
Since $[2t+Y]_{3}^{0}=2$, we have $[2t+Y-1]_{3}^{0}=1$ and it follows that
\begin{equation*}
  w^{\sigma^{-1}\beta}=(2,x_s,x_{s-1},\ldots,x_2,2;Y-2).
\end{equation*}
Thus
\begin{eqnarray*}
  w^{\sigma^{-1}\beta\sigma\alpha_s\sigma^{-1}} &=& (2,x_s,x_{s-1},\ldots,x_2,2;Y-2)^{\sigma\alpha_s\sigma^{-1}} \\
 &=& (x_s,x_{s-1},\ldots,x_2,2,0;3Y-4)^{\alpha_s\sigma^{-1}} \\
 &=& (x_s,x_{s-1},\ldots,x_2,2,1;3Y-4)^{\sigma^{-1}} \\
 &=& (0,x_s,x_{s-1},\ldots,x_2,2;(t-1)/3+Y-1)\\
 &=& (0,x_s,x_{s-1},\ldots,x_2,2;Z-1).
\end{eqnarray*}
Set $b=w^{\sigma^{-1}\beta\sigma\alpha_s\sigma^{-2}}$.
Since $[2t+Z]_{3}^{0}=2$, we have $[2t+Z-1]_{3}^{0}=1$ and therefore
\begin{eqnarray*}
  b &=& (0,x_s,x_{s-1},\ldots,x_2,2;Z-1)^{\sigma^{-1}} \\
   &=& (1,0,x_s,x_{s-1},\ldots,x_2;(2t+Z-2)/3).
\end{eqnarray*}
Thus $b\in x^H$ and $T(b)=T(x)-1$.

\medskip
Now we have proved that there always exists $b\in x^H$ such that $T(x)> T(b)$. By induction hypothesis, we get $b\in 0^H$. It follows that $x\in 0^H$. Therefore $H$ is transitive on $[3n-1]$.
\end{proof}

{\small
}
\end{document}